\documentclass[reqno,oneside,11pt]{amsart}
\usepackage{amsfonts,amssymb,amsthm,amsbsy,amscd,amsmath,bm,calc,circuitikz,color,dsfont,euscript, enumerate, geometry,graphicx,ifthen,latexsym,manfnt,marvosym,mathrsfs,marvosym,setspace,stmaryrd,textcomp,todonotes,tikz,tikz-cd, tcolorbox,upgreek,verbatim,xcolor}
\usepackage[utf8]{inputenc}
\usetikzlibrary{positioning,shapes,calc}
\definecolor{bulgarianrose}{rgb}{0.28, 0.02, 0.03}
\usepackage[linkcolor=bulgarianrose,citecolor=bulgarianrose,colorlinks=true,hypertexnames=false]{hyperref}
\allowdisplaybreaks

\newtheorem{theorem}{Theorem}[section]

\newtheorem{lemma}[theorem]{Lemma}

\theoremstyle{definition}

\newtheorem*{remark}{Remark}

\newtheorem*{example}{Example}
\newtheorem*{acknowledgement}{Acknowledgement}

\makeatletter
\def\imod#1{\allowbreak\mkern10mu({\operator@font mod}\,\,#1)}
\def\@textbottom{\vskip\z@\@plus 18pt}
\let\@texttop\relax
\makeatother

\title[Upper bound for the generalised Erd\H{o}s box problem]{An Analytic counting Framework for\\ the generalised Erd\H{o}s box problem}
\author{Subhankar Dash}
\author{Kaushik Majumder}

\address{\newline 
\newline (a) School of Mathematical Sciences\\ \newline National Institute of Science Education and Research (NISER) Bhubaneswar\\\newline Jatni, Khurda-$752050$, Odisha, India.
\newline (b) Homi Bhabha National Institute (HBNI)\\ \newline Training School Complex, Anushakti Nagar, Mumbai- $400094$, India.
\newline \textnormal{\textestimated-Mails}: (Subhankar Dash) {\tt subhankar.dash\MVAt niser.ac.in, isubha.apple\MVAt gmail.com}
\newline \textnormal{\textestimated-Mails}: (Kaushik Majumder) {\tt kaushikmajumder\MVAt niser.ac.in, kaushikbnmajumder\MVAt gmail.com}}

\subjclass[2020]{Primary: 05C65, 05D05.}
\keywords{Zarankiewicz problem, $r-$graph, Tur\'{a}n Numbers, Erd\H{o}s box problem}

\begin{document}

\begin{abstract}
In this article, we develop an $r-$uniform analogue of the classical K\"{o}v\'{a}ri–S\'{o}s–Tur\'{a}n inequality. This yields an alternative proof of Erd\H{o}s's classical upper bound for complete $r$-partite $r$-uniform hypergraphs. More precisely, we establish that for finite non-empty sets $A_{1},\ldots,A_{r}$ with $|A_{1}|\leq\cdots\leq|A_{r}|$ and sufficiently large positive integer $n$,
 \[\mathrm{ex}(n,\mathbb{K}^{(r)}[A_{1},\ldots,A_{r}])=O\left(n^{r-\frac{1}{|A_{1}|\ldots|A_{r-1}|}}\right).\]
Our proof develops an analytic counting framework based on repeated applications of H\"{o}lder's inequality and the enumeration of configurations through multiple sums. The argument combines the principle of inclusion--exclusion with a discrete analogue of Fubini's theorem to obtain recursive estimates for extremal quantities. This provides a unified analytic perspective on the generalized Erd\H{o}s box problem.
\end{abstract}
\maketitle

\section{Introduction}
A \emph{hypergraph} $H$ is a pair $(V,E)$, where $V$ is a non-empty set, called the \emph{vertex set} and $E\subset 2^{V}$ is called the \emph{edge set}. In addition, it is called an \emph{$r-$uniform hypergraph} or simply an \emph{$r-$graph} if $E\subset\binom{V}{r}$, where $r\geq2$ is an integer. For an $r-$graph  $H$, the \emph{Tur\'{a}n number} $\mathrm{ex}(n, H)$ is the maximum number of edges in an $r-$graph on $n$ vertices, which contains no copies of $H$. In other words,
\begin{equation*}
\mathrm{ex}(n,H)=\max\bigg\{|E(G)|:\text{the $r-$graph $G$ on $n-$vertices is }H-\text{free}\bigg\}.
\end{equation*}
Many authors denote this quantity by $\mathrm{ex}^{(r)}(n,H)$ or $\mathrm{ex}_{r}(n,H)$. In notation, $\mathrm{ex}(n,H)$, the $r-$graph $H$ is called the \emph{forbidden $r-$graph}.  

An $r-$graph is called $k-$partite, where the integer $k\geq r$, if its vertex set admits a partition into $k$ many non-empty parts (say) $V_{1}\sqcup\ldots\sqcup V_{k}$ and each edge contains at most one vertex from each part. We denote by $\mathbb{K}^{(r)}[A_{1},\ldots,A_{r}]$, the \emph{complete $r-$partite $r-$graph} with vertex set partition $A_{1}\sqcup\ldots\sqcup A_{r}$. Its edge set consists of all set of size $r$, of the form $\{a_{1},\ldots,a_{r}\}$, where $a_{i}\in A_{i}$ for each $i\in[r]$. Throughout this article, the forbidden  $r-$graph is taken to be $\mathbb{K}^{(r)}[A_{1},\ldots,A_{r}]$, where $A_{1},\ldots,A_{r}$ are finite non-empty mutually disjoint sets with $|A_1|\leq\cdots\leq|A_{r}|$. 

After proving the celebrated Erd\H{o}s--Stone theorem, Paul Erd\H{o}s and his distinguished collaborators considered the analogous problem in the context of $r-$graphs. As a follow-up, the following result was established:
\begin{theorem}[Erd\H{o}s]\cite{MR183654}\label{Erdos Result}
For finite non-empty sets $A_{1},\ldots,A_{r}$ with $|A_{1}|\leq\ldots\leq|A_{r}|$ and for sufficiently large positive integers $n$,
\begin{equation*}
\Omega\left(n^{r-\frac{C}{|A_{1}|\ldots|A_{r-1}|}}\right)=\mathrm{ex}(n,\mathbb{K}^{(r)}[A_{1},\ldots,A_{r}])=O\left(n^{r-\frac{1}{|A_{1}|\ldots|A_{r-1}|}}\right),
\end{equation*}
where $C>1$ is a constant independent of $\{n,r,|A_{1}|,\ldots,|A_{r}|\}$.
\end{theorem}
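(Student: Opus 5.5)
We prove the upper bound by induction on $r$, using a counting argument in the style of Kővári–Sós–Turán; the lower bound is the standard probabilistic deletion argument. Write $a_i=|A_i|$ and $s_{r}=a_1\cdots a_{r-1}$.

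\textbf{Upper bound.} Let $G$ be a $\mathbb{K}^{(r)}[A_1,\ldots,A_r]$-free $r$-graph on $n$ vertices. Passing to an $r$-partite subgraph loses only a factor $r!/r^r$, so we may assume $G$ is $r$-partite with parts $V_1,\ldots,V_r$, each of size at most $n$.

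For an $(r-1)$-tuple $x=(x_1,\ldots,x_{r-1})\in V_1\times\cdots\times V_{r-1}$ let $d(x)$ be the number of $v\in V_r$ with $\{x_1,\ldots,x_{r-1},v\}\in E(G)$; then $\sum_x d(x)=|E(G)|$. Count the pairs $(B,T)$ where $B=B_1\times\cdots\times B_{r-1}$ is a box with $B_i\subset V_i$, $|B_i|=a_i$, and $T\subset V_r$ has size $a_r$ with every tuple of $B$ joined to every vertex of $T$.

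Group these pairs by the $a_r$-set $T$. For fixed $T$, the tuples $x$ with $T\subset N(x)$ form an $(r-1)$-partite $(r-1)$-graph $G_T$. Since $G$ is $K$-free, $G_T$ contains no $\mathbb{K}^{(r-1)}[A_1,\ldots,A_{r-1}]$. By the inductive hypothesis, the number of copies of that box in $G_T$ is at most a constant times $n^{a_1+\cdots+a_{r-1}}$; equivalently, we may simply use the trivial bound that each box is completed by at most $a_r-1$ sets $T$. Together these give the upper count
\[
\#\{(B,T)\}\le (a_r-1)\binom{n}{a_1}\cdots\binom{n}{a_{r-1}} .
\]

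For the lower count, sum over $T$ of the number of boxes in $G_T$, i.e. count pairs $(B,T)$ as $\sum_B \binom{d(B)}{a_r}$, where $d(B)$ is the common degree of the box $B$. The core of the argument is a box-counting (Hölder/convexity) lemma: an $(r-1)$-partite $(r-1)$-graph with density $p$ on parts of size $n$ contains at least $c\,p^{a_1\cdots a_{r-1}}n^{a_1+\cdots+a_{r-1}}$ copies of the box. This is proved by iterating Cauchy–Schwarz/Hölder one coordinate at a time; it is Erdős's original "supersaturation" step, done with Jensen's inequality applied to the $\binom{\cdot}{a_i}$ functions coordinate by coordinate. Apply it in weighted form to the $r$-graph itself, treating $V_r$ as an extra coordinate with exponent $a_r$. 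Writing $p=|E|/n^r$, this gives
\[
\#\{(B,T)\}\gtrsim p^{a_1\cdots a_r}\,n^{a_1+\cdots+a_r}.
\]

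Comparing the two counts, $p^{a_1\cdots a_r}n^{a_r}\lesssim 1$, so $p\lesssim n^{-1/(a_1\cdots a_{r-1})}$ and $|E|=O(n^{r-1/s_r})$. The main obstacle is the Hölder iteration, in particular handling the small-degree terms where the convexity of $\binom{x}{a}$ fails. We deal with this as in the Kővári–Sós–Turán argument: replace $\binom{x}{a}$ by the convex function that is $0$ for $x<a-1$, and absorb the resulting loss into the $O$-constant, using that $n$ is large.

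\textbf{Lower bound.} Take a random $r$-partite $r$-graph with edge probability $p$ and delete one edge from each copy of $K$. The expected number of copies is about $p^{a_1\cdots a_r}n^{a_1+\cdots+a_r}$. Choose $p$ so that this is at most half the expected number of edges, $pn^r$, i.e.
\[
p=n^{-(a_1+\cdots+a_r-r)/(a_1\cdots a_r-1)}.
\]
The surviving graph is $K$-free with $\Omega(n^{r-(\sum a_i-r)/(\prod a_i-1)})$ edges. Using $a_r\ge a_{r-1}$, this exponent deficit is at most $C/(a_1\cdots a_{r-1})$ for an absolute constant $C$.
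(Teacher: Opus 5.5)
\textbf{Upper bound: the key lemma is false as stated.}
The pairs $(B,T)$ you double count, with $|T|=a_r$ and every tuple of $B$ joined to every vertex of $T$, are exactly copies of $\mathbb{K}^{(r)}[A_1,\ldots,A_r]$. So a $K$-free $G$ has none. Each $G_T$ is box-free, so the announced induction on $r$ is never used. The bound $(a_r-1)\binom{n}{a_1}\cdots\binom{n}{a_{r-1}}$ is a bound for $\sum_B d(B)$, the number of pairs (box, single vertex of $V_r$), not for $\sum_B\binom{d(B)}{a_r}$.

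Your lower count $\#\{(B,T)\}\gtrsim p^{a_1\cdots a_r}n^{a_1+\cdots+a_r}$ is therefore supersaturation for $K$ itself. Above density $n^{-1/(a_1\cdots a_{r-1})}$ it is at least as strong as the theorem you are proving. As an unconditional lemma it is false. Read literally, it puts a copy of $K$ into every non-empty $r$-graph. Already for $r=3$, $a_1=a_2=2$, your $(r-1)$-uniform version fails for $C_4$-free bipartite graphs with $p\asymp n^{-1/2}$.

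Iterated H\"older/Jensen gives $p^{a_1\cdots a_k}n^{a_1+\cdots+a_k}$ only for homomorphism counts, where coordinates may repeat. For genuine boxes it holds only above a density threshold, and establishing that threshold is the whole content here. Your sentence about the convex extension of $\binom{x}{a}$ points the right way but does not supply it. At $|E|=\lambda n^{r-1/(a_1\cdots a_{r-1})}$ the average common degree of a box is only of order $\lambda^{a_1\cdots a_{r-1}}$, independent of $n$. So "using that $n$ is large" does not help at that level: you must track the averages through every level and close the argument by taking $\lambda$ large. A correct version: with $f_a$ the convex extension and $M$ the number of boxes, $0=\sum_B f_{a_r}(d(B))\ge M f_{a_r}(\bar d)$ gives $\sum_B d(B)\le (a_r-1)M$. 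Then bound $\sum_B d(B)$, which is the sum over $v\in V_r$ of the number of boxes in the link of $v$, from below by iterated Jensen.

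The paper counts these (vertex, box) pairs directly and applies H\"older to the sum $S_1$ over all ordered tuples, where no convexity issue arises. It then bounds the tuples with a repeated coordinate separately, by a union bound over the coincidences $s_{ti}=s_{tj}$. This gives $S_2=O(|E|\,n^{a_1+\cdots+a_{r-1}-r})=O(n^{a_1+\cdots+a_{r-1}})$, the same order as the upper count. This degenerate-term estimate is exactly the ingredient your sketch is missing.

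\textbf{Lower bound: the absolute constant is not obtained.}
The paper gives no argument for this half and only cites Erd\H{o}s. Your deletion argument is the standard one and correctly yields $\Omega\bigl(n^{r-(\sum_i a_i-r)/(\prod_i a_i-1)}\bigr)$ when $a_r\ge 2$. But your final sentence is false. For $a_1=\cdots=a_r=2$ the deficit is $r/(2^r-1)$, and $r/(2^r-1)\le C/2^{r-1}$ forces $C\ge r2^{r-1}/(2^r-1)>r/2$. What the argument actually gives is $C=r$, from $\sum_i(a_i-1)\le r(a_r-1)$ and $a_1\cdots a_r-1\ge a_1\cdots a_{r-1}(a_r-1)$. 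So you obtain the lower bound only with a constant depending on $r$, not the $r$-independent $C$ claimed in the statement.
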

The lower bound was proved using the first moment method, while the upper bound was obtained through induction combined with a combinatorial neighbourhood intersection lemma. Mubayi \cite{MR1909504} employed the union method to give a second proof of the lower bound.

The graph version (that is $r=2$) of this problem, that is  determining $\mathrm{ex}(n,\mathbb{K}^{(2)}[A_{1},A_{2}])$ is famously known as \emph{Zarankiewicz problem}. As explained in the textbook \cite[Chapter~VI, \S2]{MR506522}, following the argument of K\"{o}vari, S\'{o}s and Tur\'{a}n, one obtains the following inequality for each $\mathbb{K}^{(2)}[A_{1},A_{2}]-$free graph $G$ on $n$ vertices:
\begin{equation*}
\underset{x\in V(G)}{\sum}\binom{\deg_{G}(x)}{|A_{1}|}\leq(|A_{2}|-1)\binom{n}{|A_{1}|}.    
\end{equation*}
Using this inequality, it can be concluded that for each positive integer $n$, 
\begin{equation*}
 \mathrm{ex}(n,\mathbb{K}^{(2)}[A_{1},A_{2}])\leq\frac{1}{2}\left(|A_{2}|-1\right)^{\frac{1}{|A_{1}|}}n^{2-\frac{1}{|A_{1}|}}+\frac{|A_{1}|-1}{2}n.   
\end{equation*}
The main contribution of this article is an $r-$graph analogue of the classical K\"{o}v\'{a}ri–S\'{o}s–Tur\'{a}n counting inequality. The proof follows a different approach from Erd\H{o}s's original argument. The graph case is established by the K\"{o}v\'{a}ri–S\'{o}s–Tur\'{a}n theorem \cite{MR65617}. Erd\H{o}s's proof of the $r-$graph case proceeds differently. It relies on an intersection lemma together with induction on the uniformity. Our proof approach is to turn the classical K\"{o}v\'{a}ri–S\'{o}s–Tur\'{a}n counting inequality into inequality~\eqref{inequality r-graph}. It develops recursive analytic estimates. It is based on indicator-function representations, repeated applications of H\"{o}lder's inequality, a discrete analogue of Fubini's theorem and Bonferroni's inequality. We adopt the indicator-function formulation advocated by Gowers \cite{MR2195580}. This enables us to translate the combinatorial counting problem into an analytic one. In particular, subhypergraph counts are expressed as multilinear sums of indicator functions, which can then be estimated analytically. Repeated $\mathsf{L}^{p}-$estimates are the central feature of our argument. We refer to this analytic counting framework as the \emph{$\mathsf{L}^{p}-$method}. As an application of this method, we prove a new counting inequality to prove the upper bound in Theorem~\ref{Erdos Result}. More precisely, we establish the following theorem.

\begin{theorem}\label{upper bound of Erdos Result}
 For non-empty finite sets $A_{1},\ldots,A_{r}$ with  $|A_{1}|\leq\ldots\leq|A_{r-1}|\leq|A_{r}|$, and for each positive integer $n$, 
\begin{equation*}
\mathrm{ex}(n,\mathbb{K}^{(r)}[A_{1},\ldots,A_{r}])\leq n^{r-\frac{1}{|A_{1}|\ldots|A_{r-1}|}}\frac{1}{r}\Bigg(\frac{|A_{r}|-1}{|A_{1}|!\ldots|A_{r-1}|!}+\frac{r(r-1)}{r!}\binom{|A_{r-1}|}{2}\Bigg)^{\frac{1}{|A_1|\ldots|A_{r-1}|}}.    
\end{equation*}   
\end{theorem}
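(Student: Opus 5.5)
The approach is to prove an $r$-graph analogue of the K\"{o}v\'{a}ri--S\'{o}s--Tur\'{a}n inequality and then invert it with H\"{o}lder's inequality. Let $G$ be a $\mathbb{K}^{(r)}[A_{1},\ldots,A_{r}]$-free $r$-graph on $n$ vertices. Write $a_i=|A_i|$ and let $\mathbf{1}_G$ be the indicator of ordered $r$-tuples forming an edge. The central quantity is
\begin{equation*}
N=\sum_{S_1,\ldots,S_{r-1}}\ \sum_{x}\ \prod_{(s_1,\ldots,s_{r-1})\in S_1\times\cdots\times S_{r-1}}\mathbf{1}_G(s_1,\ldots,s_{r-1},x),
\end{equation*}
where $S_i$ ranges over $a_i$-subsets of $V(G)$. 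Thus $N$ counts configurations $(S_1,\ldots,S_{r-1},x)$ in which every transversal of $S_1\times\cdots\times S_{r-1}$ together with $x$ is an edge.

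\textbf{Upper bound.} Split the tuples $(S_1,\ldots,S_{r-1})$ into two classes. If the $S_i$ are pairwise disjoint, then the set of admissible $x$ has size at most $a_r-1$, since otherwise $G$ contains the forbidden graph. This class contributes at most $(a_r-1)\prod_i\binom{n}{a_i}\le (a_r-1)\,n^{a_1+\cdots+a_{r-1}}/(a_1!\cdots a_{r-1}!)$. If some pair $S_i,S_j$ intersects, count via a Bonferroni/union bound over the $\binom{r-1}{2}$ pairs $(i,j)$ together with a shared vertex. This gives at most $\binom{r-1}{2}\binom{a_{r-1}}{2}$-type multiples of $n^{a_1+\cdots+a_{r-1}}$ (trivially allowing all $n$ choices of $x$ after losing one vertex of freedom). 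The target is
\[
N\le n^{a_1+\cdots+a_{r-1}}\Big(\frac{a_r-1}{a_1!\cdots a_{r-1}!}+\frac{r(r-1)}{r!}\binom{a_{r-1}}{2}\Big)=:K\,n^{a_1+\cdots+a_{r-1}},
\]
which is the inequality~\eqref{inequality r-graph}. Matching the exact constant $\frac{r(r-1)}{r!}\binom{a_{r-1}}{2}$ is bookkeeping. I expect it to follow from bounding each overlap term by $\binom{a_{r-1}}{2}n^{\sum a_i}$ over at most $\binom{r}{2}$ pairs and absorbing factorials, using $a_i\le a_{r-1}$.

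\textbf{Lower bound.} This is where the $\mathsf{L}^p$-method enters. Using the discrete Fubini theorem, exchange the order of summation so that $x$ is outermost. Then perform $r-1$ successive peelings. First,
\[
\sum_{S_{r-1}}\mathbf{1}[\cdots]=\binom{d(\cdot)}{a_{r-1}},
\]
where $d$ is the common neighbourhood of $x$ with a fixed $(r-2)$-tuple structure. Bound $\binom{d}{a}\ge d^{a}/a!-(\text{lower order})$ and apply H\"{o}lder (power mean) to pull the sum inside the $a_{r-1}$-th power. Iterating over $S_{r-2},\ldots,S_1$ yields
\[
N\ \gtrsim\ \frac{1}{a_1!\cdots a_{r-1}!}\,n^{\,1+a_1+\cdots+a_{r-1}-a_1\cdots a_{r-1}\cdot(r-1)}\Big(\sum_{x}\deg\text{-type sums}\Big)^{a_1\cdots a_{r-1}},
\]
ultimately expressed through $\sum_{e}\mathbf{1}_G=r!\,|E(G)|$. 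At each step one uses that the number of choices for the next block is at most $n^{a_i}$, so H\"{o}lder's exponents multiply to $a_1\cdots a_{r-1}$.

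\textbf{Conclusion.} Combining the two bounds gives roughly $(r|E|/n^{r-1})^{a_1\cdots a_{r-1}}\,n\le K\,n$ up to normalisations. Hence
\[
|E(G)|\le \tfrac1r\, K^{1/(a_1\cdots a_{r-1})}\, n^{\,r-1/(a_1\cdots a_{r-1})},
\]
which is the claimed bound.

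\textbf{Main obstacle.} I expect the main obstacle to be the lower bound. The falling-factorial corrections $\binom{d}{a}$ versus $d^a/a!$ and the degenerate (non-disjoint) tuples must be handled so that they cancel cleanly against the second term of $K$, rather than leaving an additive error. My first attempt would be to work with ordered tuples with repetition throughout, so that each peeling is an exact power sum and H\"{o}lder applies with no error. The repeated-entry tuples, whose count is controlled by the $\binom{a_{r-1}}{2}$ term, would then be moved entirely to the upper-bound side.
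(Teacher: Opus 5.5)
Your plan follows the same structure as the paper's proof. You double count $\mathfrak{S}$, get at most $|A_r|-1$ apexes for each pairwise disjoint $(S_1,\ldots,S_{r-1})$, run $r-1$ H\"older steps on ordered tuples with repetition, and absorb tuples with a repeated coordinate inside one block by a Bonferroni error. The genuine gap is the step you defer as ``bookkeeping'' and ``up to normalisations''. It cannot be carried out, because the constant in the statement is not correct.

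Write $P=|A_1|\cdots|A_{r-1}|$ and $\Sigma=|A_1|+\cdots+|A_{r-1}|$. You yourself record that the ordered sum is $|A_1|!\cdots|A_{r-1}|!\,N$ and that the full edge sum is $r!\,|E|$. With these normalisations the two sides combine to
\[
\frac{(r!\,|E|)^{P}}{n^{rP-\Sigma-1}}\le(|A_r|-1)\,n^{\Sigma}+\sum_{t<r}\binom{|A_t|}{2}\,n^{\Sigma-r}\,r!\,|E| .
\]
Using $r!|E|\le n^{r}$, this gives $|E|\le\frac{1}{r!}\bigl((|A_r|-1)+\sum_{t<r}\binom{|A_t|}{2}\bigr)^{1/P}n^{r-1/P}$. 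The factor $|A_1|!\cdots|A_{r-1}|!$ never cancels the $1/(|A_1|!\cdots|A_{r-1}|!)$ inside your $K$, so $\frac1r K^{1/P}$ does not come out.

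In fact the stated inequality is false for $r=2$. Take $|A_1|=2$, $|A_2|=3$: $K_4$ has $6$ edges and contains no $\mathbb{K}^{(2)}[A_1,A_2]$, while the right-hand side is $\frac12\sqrt2\cdot4^{3/2}=4\sqrt2<6$. This is not only a small-$n$ effect. For $|A_1|=2$, $|A_2|=t+1\ge4$, the stated bound is $\frac12\sqrt{t/2+1}\,n^{3/2}$, and F\"uredi's $K_{2,t+1}$-free graphs with $(\frac12\sqrt t+o(1))n^{3/2}$ edges exceed it for all large $n$.

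The paper reaches its constant only through two normalisation slips. First, it equates $|\mathfrak{S}|$ with the sum over all of $T$, which is off by exactly $|A_1|!\cdots|A_{r-1}|!$. Second, it evaluates $\sum_{x,s_1,\ldots,s_{r-1}}\mathds{1}_{E(H)}(\{x,s_1,\ldots,s_{r-1}\})$ as $r|E(H)|$ instead of $r!|E(H)|$; for $r\ge3$ this also makes its bound on $S_2$ too small to be valid.

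Two smaller points. Tuples with $S_i\cap S_j\neq\emptyset$ contribute $0$ to $N$, since the transversal through the shared vertex is not an $r$-set. So no $\binom{|A_{r-1}|}{2}$ term is needed in your upper bound. In the paper that term arises only from the within-block repeats $P^{(t)}_{ij}$ on the lower-bound side, as in your last paragraph, and charging it on both sides double counts.

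Also, your heuristic $(r|E|/n^{r-1})^{P}n\le Kn$ has the wrong power of $n$; it would give $|E|=O(n^{r-1})$. The correct comparison is $(r!\,|E|/n^{r})^{P}\,n\le C'$ for a constant $C'$, which is what produces the exponent $r-1/P$.

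Carried out correctly, your argument proves $\mathrm{ex}(n,\mathbb{K}^{(r)}[A_1,\ldots,A_r])=O(n^{r-1/P})$ with the constant displayed above. It does not prove the theorem with its stated constant.
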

That is, for sufficiently large positive integers $n$, we have 
\begin{equation*}
\mathrm{ex}(n,\mathbb{K}^{(r)}[A_{1},\ldots,A_{r}])=O\left(n^{r-\frac{1}{|A_{1}|\ldots|A_{r-1}|}}\right)
\end{equation*}
and the constant term involved in the above expression depends on  $\{r,|A_{1}|,\ldots,|A_{r-1}|,|A_{r}|\}$. 

In order to extend this problem, Mubayi conjectured \cite{MR1909504} that the lower bound can be improved to $C=1$, which also means that the right hand side is conjectured to give the correct order of magnitude. In other words, the conjecture says that for sufficiently large positive integers $n$,
\begin{equation*}
\mathrm{ex}(n,\mathbb{K}^{(r)}[A_{1},\ldots,A_{r}])=\Theta\left(n^{r-\frac{1}{|A_{1}|\cdots|A_{r-1}|}}\right).
\end{equation*}
The conjecture is well known to hold in a number of special cases. A proof of the partially constructive lower bound, based on the Combinatorial Nullstellensatz, was obtained for the special case $|A_{1}|=2=\cdots=|A_{r}|$ \cite{MR4735161}. This conjecture is claimed to be solved in \cite{pohoata2021norm}.  Note here that when $|A_{1}|=2=\cdots=|A_{r}|$, the problem of determining asymptotic behaviour of $\mathrm{ex}(n,\mathbb{K}^{(r)}[A_{1},\ldots,A_{r}])$ is commonly known as \emph{Erd\H{o}s box problem}. For each integer $r\geq2$, there exists a pair of positive integers $(s,t)$ satisfies $r(s-1)<(2^{r}-1)t$, (i.e. both integers $s$ and $t$ depend on $r$) for sufficiently large positive integers $n$, 
\begin{equation*}
\mathrm{ex}(n,\mathbb{K}^{(r)}[A_{1},\ldots,A_{r}])=\Omega(n^{r-\frac{t}{s}}),
\end{equation*}
where $|A_{1}|=2=\cdots=|A_{r}|$. This is the best known lower bound for Erd\H{o}s box problem \cite{MR4328787}. 
When at least one of $|A_{1}|,\ldots,|A_{r}|$ differs from $2$, we refer to the corresponding Tur\'{a}n problem as  \emph{generalised Erd\H{o}s box problem}. 
It concerns the determination of the Tur\'{a}n number of $r-$uniform hypergraphs that are free of the complete $r-$partite $r-$uniform hypergraph, whose partite sets are $A_{1},\ldots,A_{r}$ and at least one of the partite sets have size different from $2$.

\section{Prerequisite Results}
We use the following version of the Principle of inclusion and exclusion. 
\begin{theorem}[Principle of inclusion and exclusion-general version]\label{General PIE}
Let $\Omega_{1},\ldots,\Omega_{n}$ be a collection of non-empty subsets of $\Omega$. For each function $f:\underset{i\in[n]}{\bigcup}\Omega_{i}\to\mathbb{R}$, the sum of $f(x)$ over the union of these sets is given exactly by:
\begin{equation*}
\underset{x\in\underset{i\in[n]}{\bigcup}\Omega_{i}}{\sum}f(x)=\underset{\emptyset\neq S\subseteq[n]}{\sum}(-1)^{|S|-1} \underset{x\in\underset{i\in S}{\bigcap}\Omega_{i}}{\sum}f(x).
\end{equation*}
\end{theorem}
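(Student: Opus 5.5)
The statement to prove is the weighted inclusion--exclusion identity: for any finite sets $\Omega_1,\ldots,\Omega_n$ and any $f$ defined on their union, the sum of $f$ over the union equals the alternating sum of the sums of $f$ over the intersections. The plan is to reduce it to a pointwise identity for indicator functions and then exchange the order of summation, in keeping with the indicator-function formulation adopted in the introduction. Write $U=\bigcup_{i\in[n]}\Omega_i$; since all sums are finite sums of real numbers, every rearrangement below is legitimate.

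First, rewrite each inner sum on the right-hand side as a sum over $U$:
\[
\sum_{x\in\bigcap_{i\in S}\Omega_i} f(x)=\sum_{x\in U} f(x)\prod_{i\in S}\mathbf{1}_{\Omega_i}(x),
\]
which holds because $\bigcap_{i\in S}\Omega_i\subseteq U$ for every nonempty $S$. Substituting this into the right-hand side and interchanging the two finite sums (the discrete Fubini step) turns the right-hand side into
\[
\sum_{x\in U} f(x)\,c(x),\qquad c(x)=\sum_{\emptyset\neq S\subseteq[n]}(-1)^{|S|-1}\prod_{i\in S}\mathbf{1}_{\Omega_i}(x).
\]

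It therefore suffices to show $c(x)=1$ for every $x\in U$. Fix $x\in U$ and let $T=\{i\in[n]: x\in\Omega_i\}$, which is nonempty because $x$ lies in the union. The product $\prod_{i\in S}\mathbf{1}_{\Omega_i}(x)$ equals $1$ exactly when $S\subseteq T$ and is $0$ otherwise. Hence
\[
c(x)=\sum_{\emptyset\neq S\subseteq T}(-1)^{|S|-1}=\sum_{k=1}^{|T|}(-1)^{k-1}\binom{|T|}{k}=1-(1-1)^{|T|}=1,
\]
where the binomial theorem is applied with $|T|\geq 1$. Equivalently, $c(x)=1-\prod_{i\in[n]}\bigl(1-\mathbf{1}_{\Omega_i}(x)\bigr)$, which is the indicator of $U$.

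No step is a real obstacle. The only point needing care is the requirement $|T|\ge1$, so that $(1-1)^{|T|}=0$; this is guaranteed by summing only over $x\in U$. The hypothesis that each $\Omega_i$ is nonempty is not needed.
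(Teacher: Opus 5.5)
Your proof is correct and complete. The paper gives no proof of this statement at all. It quotes it as a standard tool, remarks only that $f=\mathds{1}_{\bigcup_{i}\Omega_{i}}$ recovers ordinary inclusion--exclusion, and then invokes it (for instance, to settle the case $k=n$ in the inductive proof of Bonferroni's inequality). So there is no competing argument to compare against.

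Your route is the standard one: rewrite each intersection sum as a sum over $U$ weighted by $\prod_{i\in S}\mathds{1}_{\Omega_{i}}$, interchange the finite sums, and compute the multiplicity $c(x)=1-(1-1)^{|T|}=1$. It fits the paper's indicator-function and discrete-Fubini viewpoint well.

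It also gives more than the paper extracts from it. Truncating the multiplicity at level $k$ gives $\sum_{j=1}^{k}(-1)^{j-1}\binom{|T|}{j}=1-(-1)^{k}\binom{|T|-1}{k}$. This is at least $1$ for odd $k$ and at most $1$ for even $k$, so for $f\geq 0$ Bonferroni's inequality follows immediately, with no induction on $n$.

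Finally, you were right to assume finiteness explicitly. The statement as printed omits it, but the sums only make sense for finite sets (or absolutely summable $f$), and every application in the paper is to finite sets of tuples in powers of $V(H)$.
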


Taking $f(x)=\mathds{1}_{\underset{i\in[n]}{\bigcup}\Omega_{i}}(x)$ for each $x\in\underset{i\in[n]}{\bigcup}\Omega_{i}$ deduces the usual principle of inclusion and exclusion. The following is an alternating inequality obtained from the Theorem~\ref{General PIE}.

\begin{theorem}[Bonferroni's Inequality]\label{Bonferroni's Inequality}
For each $i\in[n]$, let $\Omega_{i}$ be a non-empty set. Then for each function $f:\underset{i\in[n]}{\bigcup}\Omega_{i}\to[0,\infty)$, one the following estimate holds:-
\begin{equation*}
 \underset{x\in\underset{i\in[n]}{\bigcup}\Omega_{i}}{\sum}f(x)\begin{cases}
 \leq\underset{j=1}{\overset{k}{\sum}}(-1)^{j-1}\underset{S\in\binom{[n]}{j}}{\sum}\left(\underset{x\in\underset{s\in S}{\cap}\Omega_{s}}{\sum}f(x)\right)&\textit{ for each odd integer }k\in[1,n],\\
 \geq\underset{j=1}{\overset{k}{\sum}}(-1)^{j-1}\underset{S\in\binom{[n]}{j}}{\sum}\left(\underset{x\in\underset{s\in S}{\cap}\Omega_{s}}{\sum}f(x)\right)&\textit{ for each even integer }k\in[1,n].
 \end{cases}   
\end{equation*}
\end{theorem}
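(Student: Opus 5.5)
The plan is to reduce Theorem~\ref{Bonferroni's Inequality} to a pointwise statement about binomial coefficients, in the same spirit as Theorem~\ref{General PIE}. Write $\Omega^{\ast}=\bigcup_{i\in[n]}\Omega_{i}$, and for $x\in\Omega^{\ast}$ set $m(x)=|\{i\in[n]:x\in\Omega_{i}\}|$, so that $1\leq m(x)\leq n$. The first step is to interchange the order of summation (a discrete Fubini step) in the truncated alternating sum. For a fixed $j$, a point $x$ lies in $\bigcap_{s\in S}\Omega_{s}$ for exactly $\binom{m(x)}{j}$ sets $S\in\binom{[n]}{j}$. Hence
\begin{equation*}
\sum_{j=1}^{k}(-1)^{j-1}\sum_{S\in\binom{[n]}{j}}\ \sum_{x\in\bigcap_{s\in S}\Omega_{s}}f(x)=\sum_{x\in\Omega^{\ast}}f(x)\,c_{k}(m(x)),\qquad c_{k}(m):=\sum_{j=1}^{k}(-1)^{j-1}\binom{m}{j}.
\end{equation*}

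The second step evaluates $c_{k}(m)$ in closed form. I would prove the identity
\begin{equation*}
\sum_{j=0}^{k}(-1)^{j}\binom{m}{j}=(-1)^{k}\binom{m-1}{k}\qquad(m\geq1,\ k\geq0)
\end{equation*}
by induction on $k$. The case $k=0$ is trivial. For the inductive step, Pascal's rule $\binom{m}{k+1}=\binom{m-1}{k}+\binom{m-1}{k+1}$ gives
\begin{equation*}
(-1)^{k}\binom{m-1}{k}+(-1)^{k+1}\binom{m}{k+1}=(-1)^{k+1}\binom{m-1}{k+1}.
\end{equation*}
It follows that $c_{k}(m)=1-(-1)^{k}\binom{m-1}{k}$. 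Consequently $c_{k}(m)\geq1$ when $k$ is odd, and $c_{k}(m)\leq1$ when $k$ is even. (When $k\geq m$ we get $\binom{m-1}{k}=0$, so $c_{k}(m)=1$, which recovers Theorem~\ref{General PIE} at $k=n$.)

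The final step multiplies these pointwise bounds by $f(x)\geq0$ and sums over $x\in\Omega^{\ast}$. The left side of the theorem is $\sum_{x\in\Omega^{\ast}}f(x)\cdot1$. So it is at most the truncated sum for odd $k$ and at least the truncated sum for even $k$, which is exactly the claim. The nonnegativity of $f$ is used only here.

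The only real content is the binomial identity in the second step; everything else is bookkeeping. The one point needing care is the counting in the Fubini step: each $x$ must be counted with multiplicity $\binom{m(x)}{j}$, and this includes the case $j>m(x)$, where that multiplicity is zero.
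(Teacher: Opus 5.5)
Your proof is correct, but it takes a different route from the paper's.

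\textbf{The paper's route.} The paper argues by induction on $n$. It writes $\bigcup_{i\in[n]}\Omega_i=\Lambda\cup\Omega_n$ with $\Lambda=\bigcup_{i\in[n-1]}\Omega_i$, and uses the two-set identity $\sum_{\Lambda\cup\Omega_n}f=\sum_{\Lambda}f+\sum_{\Omega_n}f-\sum_{\Lambda\cap\Omega_n}f$. It then applies the induction hypothesis with opposite parities: the order-$k$ bound to $\Omega_1,\ldots,\Omega_{n-1}$, and the order-$(k-1)$ bound to $\Omega_1\cap\Omega_n,\ldots,\Omega_{n-1}\cap\Omega_n$. Finally it recombines the index sets via $\binom{[n]}{j}=\binom{[n-1]}{j}\sqcup\{S'\cup\{n\}:S'\in\binom{[n-1]}{j-1}\}$. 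The case $k=n$ is handed off to Theorem~\ref{General PIE}.

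\textbf{Your route.} You localise at each point $x$ through the multiplicity $m(x)$. This reduces the whole statement to the identity $\sum_{j=0}^{k}(-1)^{j}\binom{m}{j}=(-1)^{k}\binom{m-1}{k}$.

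\textbf{What your route gains.}
It gives an exact remainder rather than just an inequality:
\[
\sum_{j=1}^{k}(-1)^{j-1}\sum_{S\in\binom{[n]}{j}}\ \sum_{x\in\bigcap_{s\in S}\Omega_s}f(x)\;-\sum_{x\in\bigcup_{i\in[n]}\Omega_i}f(x)=(-1)^{k-1}\sum_{x\in\bigcup_{i\in[n]}\Omega_i}f(x)\binom{m(x)-1}{k}.
\]
It treats both parities at once. It yields Theorem~\ref{General PIE} as the special case $k=n$ instead of using it as an input. It also avoids the edge cases the induction must handle:
\begin{itemize}
\item the sets $\Omega_i\cap\Omega_n$ may be empty, although the hypothesis is stated for non-empty sets;
\item for $k=1$ the needed order-$0$ bound lies outside $[1,n-1]$ and must be replaced by $\sum_{\Lambda\cap\Omega_n}f\geq 0$;
\item the base case only really checks $k=1$.
\end{itemize}

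\textbf{What the paper's route gains.} The induction needs nothing beyond the two-set identity and no binomial identities. It mirrors the standard inductive proof of inclusion--exclusion.
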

As the proof for this version was not tracked, we reconstruct the proof here.
\begin{proof}
We prove this using induction on $n$. For each non-empty sets $\Omega_{1}$ and $\Omega_{2}$ and each function $f:\Omega_{1}\cup\Omega_{2}\to[0,\infty)$,
\begin{equation*}
\underset{x\in\Omega_{1}\cup\Omega_{2}}{\sum}f(x)=\underset{x\in\Omega_{1}}{\sum}f(x)+\underset{x\in\Omega_{2}}{\sum}f(x)-\underset{x\in\Omega_{1}\cap\Omega_{2}}{\sum}f(x)
\leq\underset{x\in\Omega_{1}}{\sum}f(x)+\underset{x\in\Omega_{2}}{\sum}f(x).    
\end{equation*}
This shows both the inequalities hold for the base case $n=2$. 

\noindent\textit{Hypothesis :} Suppose the inequalities hold for each collection of $n-1$ many non-empty sets and each non-negative function defined on the union of these sets. 

\noindent\textit{Inductive Step :} Let $\Omega_{1},\ldots,\Omega_{n}$ be $n-$non-empty sets and $f:\underset{i\in[n]}{\bigcup}\Omega_{i}\to[0,\infty)$. We construct $\Lambda=\underset{i\in[n-1]}{\bigcup}\Omega_{i}$. Then
\begin{equation*}
\underset{x\in\underset{i\in[n]}{\bigcup}\Omega_{i}}{\sum}f(x)=\underset{x\in\Lambda}{\sum}f(x)+\underset{x\in\Omega_{n}}{\sum}f(x)-\underset{x\in\Lambda\cap\Omega_{n}}{\sum}f(x).   
\end{equation*}
Let $k\in[n-1]$ be an odd integer (for case $k=n$, we settle the equality using Theorem~\ref{General PIE}). We establish the upper bound in the statement.  Applying the induction hypothesis for $\Omega_{1},\ldots,\Omega_{n-1}$,
\begin{align*}
\underset{x\in\Lambda}{\sum}f(x)\le\underset{j=1}{\overset{k}{\sum}}(-1)^{j-1}\underset{S_j\in\binom{[n-1]}{j}}{\sum}\underset{x\in\underset{s\in S_j}{\bigcap}\Omega_{s}}{\sum}f(x).
\end{align*} 

Again here $k-1$ equals an even integer, thus applying the induction hypothesis for at most $n-1$ many non-empty subsets $(\Omega_{1}\cap\Omega_{n}),\ldots,(\Omega_{n-1}\cap\Omega_{n})$,
\begin{align*}
\underset{x\in\Lambda\cap\Omega_{n}}{\sum}f(x)&\geq\underset{j=1}{\overset{k-1}{\sum}}(-1)^{j-1}\underset{S_j\in\binom{[n-1]}{j}}{\sum}\underset{x\in\underset{s\in S_j}{\bigcap}(\Omega_{s}\cap\Omega_{n})}{\sum}f(x)\\
&=\underset{j'=2}{\overset{k}{\sum}}(-1)^{j'-2}\underset{S_{j'-1}\in\binom{[n-1]}{j'-1}}{\sum}\underset{x\in\left(\underset{s\in S_{j'-1}}{\bigcap}\Omega_{s}\right)\cap\Omega_{n}}{\sum}f(x).
\end{align*}

Hence, using the above two estimates, we have 
\begin{align*}
\underset{x\in\underset{i\in[n]}{\bigcup}\Omega_{i}}{\sum}f(x)&\le\underset{j=1}{\overset{k}{\sum}}(-1)^{j-1}\underset{S_j\in\binom{[n-1]}{j}}{\sum}\underset{x\in\underset{s\in S_j}{\bigcap}\Omega_{s}}{\sum}f(x)+\underset{x\in\Omega_{n}}{\sum}f(x)\\
&\hspace{7em}-\underset{j'=2}{\overset{k}{\sum}}(-1)^{j'-2}\underset{S_{j'-1}\in\binom{[n-1]}{j'-1}}{\sum}\underset{x\in\left(\underset{s\in S_{j'-1}}{\bigcap}\Omega_{s}\right)\cap\Omega_{n}}{\sum}f(x)\\
&=\underset{s\in[n-1]}{\sum}\underset{x\in\Omega_s}{\sum}f(x)+\underset{j=2}{\overset{k}{\sum}}(-1)^{j-1}\underset{S_j\in\binom{[n-1]}{j}}{\sum}\underset{x\in\underset{s\in S_j}{\bigcap}\Omega_{s}}{\sum}f(x)+\underset{x\in\Omega_{n}}{\sum}f(x)\\
&\hspace{7em}+\underset{j'=2}{\overset{k}{\sum}}(-1)^{j'-1}\underset{S_{j'-1}\in\binom{[n-1]}{j'-1}}{\sum}\underset{x\in\left(\underset{s\in S_{j'-1}}{\bigcap}\Omega_{s}\right)\cap\Omega_{n}}{\sum}f(x)\\
&=\underset{s\in[n]}{\sum}\underset{x\in\Omega_s}{\sum}f(x)+\\
&\hspace{1cm}+\underset{j=2}{\overset{k}{\sum}}(-1)^{j-1}\left[\underset{S_j\in\binom{[n-1]}{j}}{\sum}\underset{x\in\underset{s\in S_j}{\bigcap}\Omega_{s}}{\sum}f(x)+\underset{S_{j-1}\in\binom{[n-1]}{j-1}}{\sum}\underset{x\in\left(\underset{s\in S_{j-1}}{\bigcap}\Omega_{s}\right)\cap\Omega_{n}}{\sum}f(x)\right]\\
&=\underset{s\in[n]}{\sum}\underset{x\in\Omega_s}{\sum}f(x)+\underset{j=2}{\overset{k}{\sum}}(-1)^{j-1}\underset{S_j\in\binom{[n]}{j}}{\sum}\underset{x\in\underset{s\in S_j}{\bigcap}\Omega_{s}}{\sum}f(x)\\
&=\underset{j=1}{\overset{k}{\sum}}(-1)^{j-1}\underset{S_j\in\binom{[n]}{j}}{\sum}\underset{x\in\underset{s\in S_j}{\bigcap}\Omega_{s}}{\sum}f(x).
\end{align*}    
This establishes the result for each odd integer $k\in[1,n]$. Using a similar argument, we have that for each even integer $k'\in[1,n]$, the lower bound in the statement holds. 
\end{proof}

\begin{remark}
For each $i\in[n]$, let $\Omega_{i}$ be a non-empty subset of $\Omega$ and $f:\Omega\to[0,\infty)$. Then
\begin{align*}
\underset{x\in\Omega\smallsetminus\left(\underset{i\in[n]}{\bigcup}\Omega_{i}\right)}{\sum}f(x)&=\underset{x\in\Omega}{\sum}f(x)-\underset{x\in\underset{i\in[n]}{\bigcup}\Omega_{i}}{\sum}f(x).
\end{align*}
We note that the negative sign in the above equality reverses the Bonferroni bounds: for odd positive integers $k$, it yields a lower bound, while for even positive integers $k$, it yields an upper bound.
\end{remark}

We use the following discrete version of the H\"{o}lder's inequality.
\begin{theorem}[H\"{o}lder's Inequality]\label{Holder's Inequality}
Let $p\in[1,\infty)$ and $a_{i}\geq0$ for each $i\in A$, where $A$ denotes a non-empty index set, then
\begin{equation*}
\frac{1}{|A|^{p-1}}\bigg(\underset{i\in A}{\sum}a_{i}\bigg)^{p}\leq\underset{i\in A}{\sum}a_{i}^{p}.
\end{equation*}
\end{theorem}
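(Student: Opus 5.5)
The inequality is equivalent to the power-mean (Jensen) inequality for the convex function $t\mapsto t^{p}$ on $[0,\infty)$, so I would derive it from the standard two-exponent H\"{o}lder inequality. Put $S=\sum_{i\in A}a_i$ and let $q$ be the conjugate exponent of $p$. The case $p=1$ is an equality, so I assume $p>1$ and $\frac{1}{p}+\frac{1}{q}=1$.

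First I write $S=\sum_{i\in A}a_i\cdot 1$. Applying the classical H\"{o}lder inequality to the families $(a_i)_{i\in A}$ and $(1)_{i\in A}$ gives
\begin{equation*}
S\leq\Big(\sum_{i\in A}a_i^{p}\Big)^{1/p}\Big(\sum_{i\in A}1\Big)^{1/q}=\Big(\sum_{i\in A}a_i^{p}\Big)^{1/p}|A|^{(p-1)/p}.
\end{equation*}
Both sides are non-negative, so raising to the $p$-th power preserves the inequality. This yields $S^{p}\leq |A|^{p-1}\sum_{i\in A}a_i^{p}$, and dividing by $|A|^{p-1}>0$ gives the stated inequality.

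To avoid quoting H\"{o}lder itself, I would instead use Jensen's inequality directly. Since $\phi(t)=t^{p}$ is convex on $[0,\infty)$ for $p\geq1$ (because $\phi''\geq0$ when $p>1$), we have
\begin{equation*}
\phi\Big(\frac{1}{|A|}\sum_{i\in A}a_i\Big)\leq\frac{1}{|A|}\sum_{i\in A}\phi(a_i).
\end{equation*}
Jensen for finite averages follows by induction on $|A|$ from the two-point convexity inequality. Multiplying through by $|A|$ then gives $|A|^{1-p}S^{p}\leq\sum_{i\in A}a_i^{p}$, which is the claim.

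The only step that takes any care is the exponent bookkeeping: checking that $|A|^{1/q}$ raised to the $p$-th power equals $|A|^{p-1}$. Everything else is routine, and I expect no real obstacle. The index set $A$ is finite in all applications, and the inequality is stated only for finite non-empty $A$.
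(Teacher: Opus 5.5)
Your proof is correct. The paper gives no proof of this statement and simply quotes it as the standard discrete H\"{o}lder inequality, and your first route (H\"{o}lder against the constant sequence $(1)_{i\in A}$, then raising to the $p$-th power using $p/q=p-1$) is precisely the standard derivation being invoked. Your Jensen/power-mean argument is an equally valid self-contained alternative, and you are right that finiteness of $A$ is implicit, since $|A|$ appears in the statement.
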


We also require the following lemma in the proof of the upper bound of Theorem~\ref{Erdos Result}.

\begin{lemma}\label{exponent}
Let $a_{1},\ldots,a_{k}$ be real numbers. If  for each $i\in\{2,\ldots,k\}$ 
\begin{align*}
R_{i}&=k+1-i+(a_{1}+\ldots+a_{i-1}),\\
P_{i}&=a_{i}\ldots a_{k},
\end{align*}
$R_{1}=k$,$P_{1}=a_{1}\ldots a_{k}$, $R_{k+1}=a_{1}+\ldots+a_{k}$ and $P_{k+1}=1$, then 
\begin{align*}
\underset{i\in[k]}{\sum}(k+1-i+a_{1}+\ldots+a_{i-1})(a_{i}-1)a_{i+1}\ldots a_{k}&=\underset{i\in[k]}{\sum}R_{i}(P_{i}-P_{i+1})\\
&=(k+1)a_{1}\ldots a_{k}-(a_{1}+\ldots+a_{k})-1.   
\end{align*}
\end{lemma}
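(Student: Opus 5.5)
The identity has two equalities, and I will prove each by a telescoping/summation-by-parts argument. Throughout I use the conventions $P_{k+1}=1$ and $P_i=a_i\cdots a_k$.

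\textbf{First equality.} For each $i\in[k]$,
\[
P_i-P_{i+1}=a_ia_{i+1}\cdots a_k-a_{i+1}\cdots a_k=(a_i-1)a_{i+1}\cdots a_k .
\]
For $i=k$ the factor $a_{i+1}\cdots a_k$ is the empty product $1=P_{k+1}$. The coefficient in the $i$-th summand is $k+1-i+a_1+\cdots+a_{i-1}$. This equals $R_i$ for $i\ge 2$ by definition. For $i=1$ it equals $k=R_1$, since the sum $a_1+\cdots+a_0$ is empty. So the left-hand side is $\sum_{i\in[k]}R_i(P_i-P_{i+1})$ term by term.

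\textbf{Second equality.} I apply Abel summation:
\[
\sum_{i=1}^{k}R_i(P_i-P_{i+1})=R_1P_1-R_kP_{k+1}+\sum_{i=2}^{k}(R_i-R_{i-1})P_i .
\]
Here $R_1P_1=k\,a_1\cdots a_k$ and $R_kP_{k+1}=R_k=1+a_1+\cdots+a_{k-1}$.

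Next I compute the increments $R_i-R_{i-1}$ for $2\le i\le k$. Reading $R_1$ through the general formula $k+1-i+\sum_{j<i}a_j$ (valid since it gives $k$ at $i=1$), I get $R_i-R_{i-1}=a_{i-1}-1$. Hence
\[
\sum_{i=2}^{k}(R_i-R_{i-1})P_i=\sum_{i=2}^{k}(a_{i-1}-1)a_i\cdots a_k=\sum_{i=2}^{k}(P_{i-1}-P_i)=P_1-P_k=a_1\cdots a_k-a_k .
\]

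Combining the three pieces gives
\[
k\,a_1\cdots a_k-(1+a_1+\cdots+a_{k-1})+a_1\cdots a_k-a_k=(k+1)a_1\cdots a_k-(a_1+\cdots+a_k)-1,
\]
as claimed.

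\textbf{Where care is needed.} The computation is routine. The only delicate points are the boundary conventions: the case $i=1$ (the empty sum gives $R_1=k$) and $i=k$ (the empty product gives $P_{k+1}=1$). The value $R_{k+1}=a_1+\cdots+a_k$ given in the statement is not used. For $k=1$ the formula reduces to $a_1-1=2a_1-a_1-1$, which I check directly.
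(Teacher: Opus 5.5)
Your proof is correct and is essentially the paper's argument: summation by parts using $R_{i+1}-R_i=a_i-1$ together with $(a_i-1)P_{i+1}=P_i-P_{i+1}$, which makes the remaining sum telescope. The only cosmetic difference is that the paper extends the recurrence to $i=k$ and uses the boundary term $R_{k+1}P_{k+1}=a_1+\cdots+a_k$, obtaining $R_1P_1-R_{k+1}P_{k+1}+P_1-P_{k+1}$, whereas you stop at $R_kP_{k+1}$ and recover the missing $-a_k$ from $P_1-P_k$.
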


\begin{proof}
 For each $i\in[k]$, we have $R_{i+1}=R_{i}+a_{i}-1$ leads, the required summation equals $R_{1}P_{1}-R_{k+1}P_{k+1}+P_{1}-P_{k+1}=(k+1)a_{1}\ldots a_{k}-(a_{1}+\ldots+a_{k})-1$.    
\end{proof}

\section{Proof of the Theorem~\ref{upper bound of Erdos Result}}

Let $H$ be a $\mathbb{K}^{(r)}[A_{1},\ldots,A_{r}]-$free $r-$graph on $n-$vertices where each $A_{i}$ is a non-empty finite set. For subsets $S_{1},\ldots,S_{r-1}$ of $V(H)$, we construct
\begin{equation*}
 \mathds{1}_{E(H)}(x,S_{1},\ldots,S_{r-1})=\underset{\substack{s_{1}\in S_{1}\\\vdots\\s_{r-1}\in S_{r-1}}}{\prod}\mathds{1}_{E(H)}(\{x,s_{1},\ldots,s_{r-1}\}).   
\end{equation*}
We observe that if $s\in S_{i}\cap S_{j}$ for some distinct $i,j\in[r-1]$, then $\mathds{1}_{E(H)}(x,S_{1},\ldots,S_{r-1})=0$. Thus, the above product is non-zero only when the subsets $S_{1},\ldots,S_{r-1}\subseteq V(H)$ are pairwise disjoint.

Finally, we construct
\begin{equation*}
\mathfrak{S}=\left\{(x,S_{1},\ldots,S_{r-1}\}):S_{i}\in\binom{V(H)}{|A_{i}|},\ \forall i\in\left[r-1\right]\textnormal{ and }
    \mathds{1}_{E(H)}(x,S_{1},\ldots,S_{r-1})=1\right\}.
\end{equation*}
Applying Fubini's theorem (i.e., interchanging the order of summation), we count $\mathfrak{S}$ in two equivalent ways:
\begin{align*}
|\mathfrak{S}|&=\underset{\substack{S_{1},\ldots,S_{r-1}\subseteq V(H)\\|s_{1}|=|A_{1}|,\ldots,|S_{r-1}|=|A_{r-1}|}}{\sum}\underset{x\in V(H)}{\sum} \mathds{1}_{E(H)}(x,S_{1},\ldots,S_{r-1})\hspace{5em}(\textnormal{First way})\\
&=\underset{x\in V(H)}{\sum} \underset{\substack{S_{1},\ldots,S_{r-1}\subseteq V(H)\\|s_{1}|=|A_{1}|,\ldots,|S_{r-1}|=|A_{r-1}|}}{\sum}\mathds{1}_{E(H)}(x,S_{1},\ldots,S_{r-1})\hspace{5em}(\textnormal{Second way}).
\end{align*}
We estimate the first expression from above and the second from below.

\noindent\textbf{First Way:} Let $S_{1},\ldots,S_{r-1}$ be subsets of $V(H)$ and $|S_{i}|=|A_{i}|$ for each $i\in[r-1]$.

\noindent\textit{Claim.}
\begin{equation*}
\underset{x\in V(H)}{\sum} \mathds{1}_{E(H)}(x,S_{1},\ldots,S_{r-1})\leq|A_{r}|-1.
\end{equation*}
\begin{proof}[\tt{Proof of claim} :]\renewcommand{\qedsymbol}{}
If the subsets $S_{1},\ldots,S_{r-1}$ of $V(H)$ are not pairwise disjoint, then the sum is $0$ and the result holds trivially. Hence we assume that they are pairwise disjoint. 

If possible suppose 
\begin{equation*}
\underset{x\in V(H)}{\sum} \mathds{1}_{E(H)}(x,S_{1},\ldots,S_{r-1})\geq|A_{r}|.    
\end{equation*}
Then we can choose $x_{1},\ldots,x_{|A_{r}|}\in V(H)$, with $\mathds{1}_{E(H)}(x_k,S_{1},\ldots,S_{r-1})=1$ for each $k\in[|A_{r}|]$.
This implies for $U=\{x_{1},\ldots,x_{|A_{r}|}\}$ the induced $r-$subgraph $H[U,S_{1},\ldots,S_{r-1}]$ of the $r-$graph $H$, contains a copy of $\mathbb{K}^{(r)}(A_{1},\ldots,A_{r})$. A contradiction arises, which establishes the claim.
\end{proof}

There are $\frac{|(S_{1}\sqcup\ldots\sqcup S_{r-1})|)!}{|A_{1}|!\ldots|A_{r-1}|!}$ many partitions of $S_{1}\sqcup\ldots\sqcup S_{r-1}$ of the form $S'_{1}\sqcup\ldots\sqcup S'_{r-1}$ where $|S'_{1}|=|A_{1}|,\ldots,|S'_{r-1}|=|A_{r-1}|$, i.e.
\begin{align*}
&\bigg|\bigg\{(S'_{1},\ldots,S'_{r-1}):\underset{i\in[r-1]}{\sqcup}S'_{i}=\underset{i\in[r-1]}{\sqcup}S_{i},|S'_{i}|=|A_{i}|\textup{ and }S'_{i}\subset\underset{i\in[r-1]}{\sqcup}S_{i},\ \forall i\in[r-1]\bigg\}\bigg|\\
&\hspace{3cm}=\frac{|\underset{i\in[r-1]}{\sqcup}S_{i}|!}{\underset{i\in[r-1]}{\prod}|A_{i}|!}=\frac{\left(\underset{i\in[r-1]}{\sum}|A_{i}|\right)!}{\underset{i\in[r-1]}{\prod}|A_{i}|!}.  
\end{align*}
Using the previous claim, we have 
\begin{align*}
|\mathfrak{S}|=&\underset{\substack{S_{1},\ldots,S_{r-1}\subseteq V(H)\\|s_{1}|=|A_{1}|,\ldots,|S_{r-1}|=|A_{r-1}|}}{\sum}\underset{x\in V(H)}{\sum} \mathds{1}_{E(H)}(x,S_{1},\ldots,S_{r-1})\\
\leq&\underset{\substack{S_{1},\ldots,S_{r-1}\subseteq V(H)\\|s_{1}|=|A_{1}|,\ldots,|S_{r-1}|=|A_{r-1}|}}{\sum}(|A_{r}|-1)\\
&=(|A_{r}|-1)\frac{(|A_{1}|+\ldots+|A_{r-1}|)!}{|A_{1}|!\ldots|A_{r-1}|!}\binom{|V(H)|}{|A_{1}|+\ldots+|A_{r-1}|}.
\end{align*}

\noindent\textbf{Second Way:} To simplify the subsequent calculations, we introduce the following notation. We denote $\vec{s}=(s_{1},\ldots,s_{\alpha})\in V(H)^{\alpha}$. When all the coordinates of $\vec{s}$ are distinct, we occasionally write $\vec{\vec{s}}$ in place of $\vec{s}$. Given a subset $S \subseteq V(H)$ with $|S|=\alpha$, we represent $S$ by an ordered tuple $\vec{\vec{s}} \in V(H)^{\alpha}$, obtained by fixing an arbitrary ordering of its elements. With this notation in place, the preceding expression can be rewritten as
\begin{equation*}
 \mathds{1}_{E(H)}(x,S_{1},\ldots,S_{r-1})=\mathds{1}_{E(H)}(x,\vec{\vec{s}}_{1},\ldots,\vec{\vec{s}}_{r-1}).   
\end{equation*}

For each $t\in[r-1]$, let $\vec{s}_{t}=(s_{t1},\ldots,s_{t|A_{t}|})\in V(H)^{|A_{t}|}$. For each $t\in[r-1]$ and $i,j\in[|A_{t}|]$, we construct the following set of $(|A_{1}|+\cdots+|A_{r-1}|)-$tuples: 
\begin{equation*}
 P_{ij}^{(t)}=\left\{(\vec{s}_{1},\ldots,\vec{s}_{t},\ldots,\vec{s}_{r-1})\in V(H)^{|A_{1}|}\times\cdots\times V(H)^{|A_{r-1}|}: s_{ti}=s_{tj}\right\}.  
\end{equation*}
Further, let $T$ denote the set of $(|A_{1}|+\ldots+|A_{r-1}|)-$tuples whose coordinates are pairwise distinct, that is,
\begin{equation*}
T=V(H)^{|A_{1}|}\times\ldots\times V(H)^{|A_{r-1}|}\smallsetminus\left[\underset{t\in[r-1]}{\bigcup}\left(\underset{\substack{i,j\in[|A_{t}|]\\i<j}}{\bigcup}P_{ij}^{(t)}\right)\right].
\end{equation*}

Applying Bonferroni's inequality (Theorem~\ref{Bonferroni's Inequality}) with the odd integer $k=1$ to the last inequality, we obtain
\begin{align*}
|\mathfrak{S}|=&\underset{x\in V(H)}{\sum}\ \underset{\substack{S_{1},\ldots,S_{r-1}\subseteq V(H)\\|s_{1}|=|A_{1}|,\ldots,|S_{r-1}|=|A_{r-1}|}}{\sum}\mathds{1}_{E(H)}(\{x,S_{1},\ldots,S_{r-1}\})\\
=&\underset{x\in V(H)}{\sum}\ \underset{\vec{\vec{s_{1}}}\in V(H)^{|A_{1}|},\ldots,\vec{\vec{s}}_{r-1}\in V(H)^{|A_{r-1}|}}{\sum}\mathds{1}_{E(H)}(\{x,\vec{\vec{s_{1}}},\ldots,\vec{\vec{s}}_{r-1}\})\\
=&\underset{x\in V(H)}{\sum}\ \underset{(\vec{s_{1}},\ldots,\vec{s}_{r-1})\in T}{\sum}\mathds{1}_{E(H)}(\{x,\vec{s_{1}},\ldots,\vec{s}_{r-1}\})\\
=&\underset{x\in V(H)}{\sum}\ \underset{(\vec{s_{1}},\ldots,\vec{s}_{r-1})\in V(H)^{|A_{1}|}\times\ldots\times V(H)^{|A_{r-1}|}}{\sum}\mathds{1}_{E(H)}(\{x,\vec{s_{1}},\ldots,\vec{s}_{r-1}\})\\
&\hspace{5em}-\underset{x\in V(H)}{\sum}\ \underset{(\vec{s_{1}},\ldots,\vec{s}_{r-1})\in \underset{k\in[r-1]}{\bigcup}\left(\underset{\substack{i,j\in[|A_{k}|]\\i<j}}{\bigcup}P_{ij}^{(k)}\right)}{\sum}\mathds{1}_{E(H)}(\{x,\vec{s_{1}},\ldots,\vec{s}_{r-1}\})\\
&\geq\underset{x\in V(H)}{\sum}\ \underset{(\vec{s_{1}},\ldots,\vec{s}_{r-1})\in V(H)^{|A_{1}|}\times\ldots\times V(H)^{|A_{r-1}|}}{\sum}\mathds{1}_{E(H)}(\{x,\vec{s_{1}},\ldots,\vec{s}_{r-1}\})\\
&\hspace{5em}-\underset{x\in V(H)}{\sum}\underset{k\in[r-1]}{\sum}\underset{(\vec{s_{1}},\ldots,\vec{s}_{r-1})\in \underset{\substack{i,j\in[|A_{k}|]\\i<j}}{\bigcup}P_{ij}^{(k)}}{\sum}\mathds{1}_{E(H)}(\{x,\vec{s_{1}},\ldots,\vec{s}_{r-1}\})\\
&=S_{1}-S_{2}\textup{ (say) }.
\end{align*}
Recall that we are estimating $|\mathfrak{S}|$. In the first way we derive an upper bound, whereas in the second way, we need a lower bound. Consequently, it suffices to obtain a lower bound for $S_{1}-S_{2}$.

\begin{example}
Before estimating $S_{1}$ and $S_{2}$ for general values of $r$, we illustrate the computation in the case $r=3$ to facilitate the exposition. In the following estimation of $S_{1}$ for the case $r=3$, we have used the H\"{o}lder's inequality (Theorem~\ref{Holder's Inequality}) multiple (two) times. 
\begin{align*}
S_{1}=&\underset{x\in V(H)}{\sum}\underset{(\vec{u},\vec{v})\in V(H)^{|A_{1}|}\times V(H)^{|A_{2}|}}{\sum}\mathds{1}_{E(H)}(x,\vec{u},\vec{v})\\
=&\underset{x\in V(H)}{\sum}\ \ \underset{\vec{u}\in V(H)^{|A_{1}|}}{\sum}\left(\underset{v\in V(H)}{\sum}\ \ \mathds{1}_{E(H)}(x,\vec{u},v)\right)^{|A_{2}|}\\
\geq&\frac{1}{|V(H)|^{(1+|A_{1}|)(|A_{2}|-1)}}\left[\underset{x\in V(H)}{\sum}\underset{\vec{u}\in V(H)^{|A_{1}|}}{\sum}\underset{v\in V(H)}{\sum}\mathds{1}_{E(H)}(x,\vec{u},v)\right]^{|A_{2}|}\\
=&\frac{1}{|V(H)|^{(1+|A_{1}|)(|A_{2}|-1)}}\left[\underset{x\in V(H)}{\sum}\underset{v\in V(H)}{\sum}\underset{\vec{u}\in V(H)^{|A_{1}|}}{\sum}\mathds{1}_{E(H)}(x,\vec{u},v)\right]^{|A_{2}|}\\
=&\frac{1}{|V(H)|^{(1+|A_{1}|)(|A_{2}|-1)}}\left[\underset{x\in V(H)}{\sum}\underset{v\in V(H)}{\sum}\bigg(\underset{u\in V(H)}{\sum}\mathds{1}_{E(H)}(\{x,u,v\})\bigg)^{|A_{1}|}\right]^{|A_{2}|}\\
\geq&\frac{1}{|V(H)|^{(1+|A_{1}|)(|A_{2}|-1)}}\left[\frac{1}{|V(H)^2|^{|A_{1}|-1}}\bigg(\underset{x\in V(H)}{\sum}\underset{v\in V(H)}{\sum}\underset{u\in V(H)}{\sum}\mathds{1}_{E(H)}(\{x,u,v\})\bigg)^{|A_{1}|}\right]^{|A_{2}|}\\
=&\frac{1}{|V(H)|^{(1+|A_{1}|)(|A_{2}|-1)}}\frac{1}{|V(H)|^{2(|A_{1}|-1)|A_{2}|}}\bigg(\underset{x\in V(H)}{\sum}\underset{u\in V(H)}{\sum}\underset{v\in V(H)}{\sum}\mathds{1}_{E(H)}(\{x,u,v\})\bigg)^{|A_{1}||A_{2}|}\\
=&\frac{(3|E(H)|)^{|A_{1}||A_{2}|}}{|V(H)|^{3|A_{1}||A_{2}|-|A_{1}|-|A_{2}|-1}}
\end{align*}
For the illustrative case $r=3$, we estimate the second term $S_{2}$. Here using principle of inclusion and exclusion over the index set we get,
\begin{align*}
S_{2}&=\underset{x\in V(H)}{\sum}\ \ \underset{(\vec{u},\vec{v})\in \underset{\substack{i,j\in[|A_{1}|]\\ i\neq j}}{\bigcup}P^{(1)}_{ij}}{\sum}\ \ \mathds{1}_{E(H)}(x,\vec{u},\vec{v})+\\
&\hspace{1cm}+\underset{x\in V(H)}{\sum}\ \ \underset{(\vec{u},\vec{v})\in \underset{\substack{k,l\in[|A_{2}|]\\ k\neq l}}{\bigcup}P^{(2)}_{kl}}{\sum}\ \ \mathds{1}_{E(H)}(x,\vec{u},\vec{v})-\\
&\hspace{1cm}-\underset{x\in V(H)}{\sum}\ \ \underset{(\vec{u},\vec{v})\in \bigg(\underset{\substack{i,j\in[|A_{1}|]\\ i\neq j}}{\bigcup}P^{(1)}_{ij}\bigg)\cap\bigg(\underset{\substack{k,l\in[|A_{2}|]\\ k\neq l}}{\bigcup}P^{(2)}_{kl}\bigg)}{\sum}\ \ \mathds{1}_{E(H)}(x,\vec{u},\vec{v})\\ 
&= S_{21}+S_{22}-S_{23}\text{ (say). }
\end{align*}
Applying Bonferroni's inequality (Theorem~\ref{Bonferroni's Inequality}) with the odd integer $k=1$, we have 
\begin{align*}
S_{21}=&\underset{x\in V(H)}{\sum}\ \ \underset{(\vec{u},\vec{v})\in \underset{\substack{i,j\in[|A_{1}|]\\ i\ne j}}{\bigcup}P^{(1)}_{ij}}{\sum}\ \ \mathds{1}_{E(H)}(x,\vec{u},\vec{v})\\
\leq&\underset{x\in V(H)}{\sum}\ \ \underset{\{i,j\}\subseteq\binom{[|A_{1}|]}{2}}{\sum}\underset{(\vec{u},\vec{v})\in P^{(1)}_{ij}}{\sum}\ \ \mathds{1}_{E(H)}(x,\vec{u},\vec{v})\\
=&\underset{\{i,j\}\subseteq\binom{[|A_{1}|]}{2}}{\sum}\underset{x\in V(H)}{\sum}\underset{(\vec{u},\vec{v})\in P^{(1)}_{ij}}{\sum}\ \ \mathds{1}_{E(H)}(x,\vec{u},\vec{v})\\
\end{align*}
Let $\{i,j\}\in \binom{[|A_{1}|]}{2}$. Then  between $P^{(1)}_{ij}$ and $V(H)^{|A_{1}|-1}\times V(H)^{|A_{2}|}$, there exists an one-to-one and onto map, which is the \emph{projection map} defined as 
\begin{equation*}
 (\vec{u},\vec{v})\mapsto((u_{1},...,u_{i-1},u_{i},u_{i+1},\ldots,u_{j-1},u_{j+1},...,u_{|A|}),\vec{v}).  
\end{equation*}
Hence, the above sum becomes,
\begin{align*}
S_{21}=&\underset{\{i,j\}\subseteq\binom{[|A_{1}|]}{2}}{\sum}\underset{x\in V(H)}{\sum}\underset{(\vec{u},\vec{v})\in V(H)^{|A_{1}|-1}\times V(H)^{|A_{2}|}}{\sum}\ \ \mathds{1}_{E(H)}(x,\vec{u},\vec{v})\\
=&\binom{|A_{1}|}{2}\underset{x\in V(H)}{\sum}\underset{(\vec{u},\vec{v})\in V(H)^{|A_{1}|-1}\times V(H)^{|A_{2}|}}{\sum}\ \ \mathds{1}_{E(H)}(x,\vec{u},\vec{v})\\
=&\binom{|A_{1}|}{2}\underset{x\in V(H)}{\sum}\underset{\vec{u}\in V(H)^{|A_{1}|-1}}{\sum}\left(\underset{v\in V(H)}{\sum}\mathds{1}_{E(H)}(x,\vec{u},v)\right)^{|A_{2}|}\\
\leq&\binom{|A_{1}|}{2}\underset{x\in V(H)}{\sum}\underset{\vec{u}\in V(H)^{|A_{1}|-1}}{\sum}|V(H)|^{|A_{2}|-1}\underset{v\in V(H)}{\sum}\mathds{1}_{E(H)}(x,\vec{u},v)\\
=&\binom{|A_{1}|}{2}|V(H)|^{|A_{2}|-1}\underset{x,v\in V(H)}{\sum}\underset{\vec{u}\in V(H)^{|A_{1}|-1}}{\sum}\mathds{1}_{E(H)}(x,\vec{u},v)\\
=&\binom{|A_{1}|}{2}|V(H)|^{|A_{2}|-1}\underset{x,v\in V(H)}{\sum}\left(\underset{u\in V(H)}{\sum}\mathds{1}_{E(H)}(\{x,u,v\})\right)^{|A_{1}|-1}\\
\leq&\binom{|A_{1}|}{2}|V(H)|^{|A_{2}|-1}\underset{x,v\in V(H)}{\sum}|V(H)|^{|A_{1}|-2}\underset{u\in V(H)}{\sum}\mathds{1}_{E(H)}(\{x,u,v\})\\
=&\binom{|A_{1}|}{2}|V(H)|^{|A_{1}|+|A_{2}|-3}\underset{x,u,v\in V(H)}{\sum}\mathds{1}_{E(H)}(\{x,u,v\})\\
=&\binom{|A_{1}|}{2}|V(H)|^{|A_{1}|+|A_{2}|-3}3|E(H)|.
\end{align*}

A similar calculation shows that
\begin{equation*}
S_{22}\leq\binom{|A_{2}|}{2}|V(H)|^{|A_{1}|+|A_{2}|-3}3|E(H)|.    
\end{equation*} 
Since $S_{23}\geq0$, we have the upper bound  
\begin{align*}
S_{2}=S_{21}+S_{22}-S_{23}\leq S_{21}+S_{22}\leq\bigg[\binom{|A_{1}|}{2}+\binom{|A_{2}|}{2}\bigg]|V(H)|^{|A_{1}|+|A_{2}|-3}3|E(H)|.   
\end{align*}
Consequently, for the case $r=3$, we obtain the following lower bound for $S_{1}-S_{2}$, that is
\begin{align*}
 S_{1}-S_{2}&\geq S_{1}-S_{21}-S_{22}\\
 &\geq\frac{(3|E(H)|)^{|A_{1}||A_{2}|}}{|V(H)|^{3|A_{1}||A_{2}|-|A_{1}|-|A_{2}|-1}}-\bigg[\binom{|A_{1}|}{2}+\binom{|A_{2}|}{2}\bigg]|V(H)|^{|A_{1}|+|A_{2}|-3}3|E(H)|.  
\end{align*}
\end{example}

We now proceed to estimate $S_{1}$ and $S_{2}$ for general $r$. As in the case $r=3$, the estimation of $S_{1}$ is obtained by applying H\"{o}lder's inequality (Theorem~\ref{Holder's Inequality}) repeatedly, a total of $(r-1)$ times, to derive the desired expression. Finally, we apply the Lemma~\ref{exponent} to simplify the exponent of $|V(H)|$.
\begin{align*}
S_{1}=&\underset{x\in V(H)}{\sum}\underset{\substack{\vec{s}_{1}\in V(H)^{|A_{1}|}\\\vdots\\\vec{s}_{r-1}\in V(H)^{|A_{r-1}|}}}{\sum}\mathds{1}_{E(H)}(x,\vec{s_{1}},\ldots,\vec{s}_{r-1})\\
=&\underset{x\in V(H)}{\sum}\underset{\vec{s_{1}}\in V(H)^{|A_{1}|}}{\sum}\ldots\underset{\vec{s}_{r-2}\in V(H)^{|A_{r-2}|}}{\sum}\left(\underset{s_{r-1}\in V(H)}{\sum}\mathds{1}_{E(H)}(x,\vec{s_{1}},\ldots,\vec{s}_{r-2},s_{r-1})\right)^{|A_{r-1}|}\\
\geq&\frac{1}{|V(H)|^{(1+|A_{1}|+\ldots+|A_{r-2}|)(|A_{r-1}|-1)}}\\
&\hspace{1cm}\Bigg[\underset{x\in V(H)}{\sum}\underset{\vec{s_{1}}\in V(H)^{|A_{1}|}}{\sum}\ldots\underset{\vec{s}_{r-2}\in V(H)^{|A_{r-2}|}}{\sum}\underset{s_{r-1}\in V(H)}{\sum}\mathds{1}_{E(H)}(x,\vec{s_{1}},\ldots,\vec{s}_{r-2},s_{r-1})\Bigg]^{|A_{r-1}|}\\
=&\frac{1}{|V(H)|^{(1+|A_{1}|+\ldots+|A_{r-2}|)(|A_{r-1}|-1)}}\\
&\hspace{1cm}\Bigg[\underset{x,s_{r-1}\in V(H)}{\sum}\underset{\vec{s_{1}}\in V(H)^{|A_{1}|}}{\sum}\ldots\underset{\vec{s}_{r-2}\in V(H)^{|A_{r-2}|}}{\sum}\mathds{1}_{E(H)}(x,\vec{s_{1}},\ldots,\vec{s}_{r-2},s_{r-1})\Bigg]^{|A_{r-1}|}\\
=&\frac{1}{|V(H)|^{(1+|A_{1}|+\ldots+|A_{r-2}|)(|A_{r-1}|-1)}}\Bigg[\underset{x,s_{r-1}\in V(H)}{\sum}\underset{\vec{s_{1}}\in V(H)^{|A_{1}|}}{\sum}\ldots\underset{\vec{s}_{r-3}\in V(H)^{|A_{r-3}|}}{\sum}\\
&\hspace{1cm}\left(\underset{s_{r-2}\in V(n^H)}{\sum}\mathds{1}_{E(H)}(\{x,\vec{s_{1}},\ldots,\vec{s}_{r-3},s_{r-2},s_{r-1}\})\right)^{|A_{r-2}|}\Bigg]^{|A_{r-1}|}\\
\geq&\frac{1}{|V(H)|^{(1+|A_{1}|+\ldots+|A_{r-2}|)(|A_{r-1}|-1)}}\Bigg[\frac{1}{|V(H)|^{(2+|A_{1}|+\ldots+|A_{r-3}|)(|A_{r-2}|-1)}}\Bigg(\underset{x,s_{r-1}\in V(H)}{\sum}\\
&\hspace{1cm}\ldots\underset{\vec{s_{1}}\in V(H)^{|A_{1}|}}{\sum}\underset{\vec{s}_{r-3}\in V(H)^{|A_{r-3}|}}{\sum}\underset{s_{r-2}\in V(H)}{\sum}\mathds{1}_{E(H)}(x,\vec{s_{1}},\ldots,\vec{s}_{r-3},s_{r-2},s_{r-1})\Bigg)^{|A_{r-2}|}\Bigg]^{|A_{r-1}|}\\
=&\frac{1}{|V(H)|^{(1+|A_{1}|+\ldots+|A_{r-2}|)(|A_{r-1}|-1)+(2+|A_{1}|+\ldots+|A_{r-3}|)(|A_{r-2}|-1)|A_{r-1}|}}\Bigg[\underset{x,s_{r-1}\in V(H)}{\sum}\underset{\vec{s_{1}}\in V(H)^{|A_{1}|}}{\sum}\\
&\hspace{1cm}\ldots\underset{\vec{s}_{r-3}\in V(H)^{|A_{r-3}|}}{\sum}\underset{s_{r-2}\in V(H)}{\sum}\mathds{1}_{E(H)}(x,\vec{s_{1}},\ldots,\vec{s}_{r-3},s_{r-2},s_{r-1})\Bigg]^{|A_{r-2}||A_{r-1}|}\\
&\hspace{1cm}\vdots\\
=&\frac{1}{|V(H)|^{(1+|A_{1}|+\ldots+|A_{r-2}|)(|A_{r-1}|-1)+(2+|A_{1}|+\ldots+|A_{r-3}|)(|A_{r-2}|-1)|A_{r-1}|+\ldots+(r-1)(|A_{1}|-1)|A_{2}|\ldots|A_{r-1}|}}\\
&\hspace{7em}\Bigg[\underset{x,s_{1},\ldots,s_{r-1}\in V(H)}{\sum}\mathds{1}_{E(H)}(\{x,s_{1},\ldots,s_{r-1}\})\Bigg]^{|A_{1}|\ldots|A_{r-1}|}\\
=&\frac{(r|E(H)|)^{|A_{1}|\ldots|A_{r-1}|}}{|V(H)|^{r|A_{1}|\ldots|A_{r-1}|-(|A_{1}|+\ldots+|A_{r-1}|)-1}}.
\end{align*}

We proceed similarly to the previous example of case $r=3$ to derive an upper bound for
\begin{align*}
S_{2}=&\underset{x\in V(H)}{\sum}\underset{k\in[r-1]}{\sum}\underset{(\vec{s_{1}},\ldots,\vec{s}_{r-1})\in \underset{\substack{i,j\in[|A_{k}|]\\i<j}}{\bigcup}P_{ij}^{(k)}}{\sum}\mathds{1}_{E(H)}(x,\vec{s_{1}},\ldots,\vec{s}_{r-1})\\
=&\underset{k\in[r-1]}{\sum}\underset{x\in V(H)}{\sum}\underset{(\vec{s_{1}},\ldots,\vec{s}_{r-1})\in \underset{\substack{i,j\in[|A_{k}|]\\i<j}}{\bigcup}P_{ij}^{(k)}}{\sum}\mathds{1}_{E(H)}(x,\vec{s_{1}},\ldots,\vec{s}_{r-1})\\
=&S_{21}+\ldots+S_{2(r-1)}\text{ (say), }
\end{align*}
where for each $t\in[r-1]$
\begin{equation*}
S_{2t}=\underset{x\in V(H)}{\sum}\underset{(\vec{s_{1}},\ldots,\vec{s}_{r-1})\in \underset{\substack{i,j\in[|A_{t}|]\\i<j}}{\bigcup}P_{ij}^{(t)}}{\sum}\mathds{1}_{E(H)}(x,\vec{s_{1}},\ldots,\vec{s}_{r-1}).  
\end{equation*}

Again using Bonferroni's inequality (Theorem~\ref{Bonferroni's Inequality}) with the odd integer $k=1$, we have 
\begin{align*}
S_{21}=&\underset{x\in V(H)}{\sum}\underset{(\vec{s_{1}},\ldots,\vec{s}_{r-1})\in \underset{\substack{i,j\in[|A_{1}|]\\i<j}}{\bigcup}P_{ij}^{(1)}}{\sum}\mathds{1}_{E(H)}(x,\vec{s_{1}},\ldots,\vec{s}_{r-1})\\
\leq&\underset{1\leq i<j\leq|A_{1}|}{\sum}\underset{x\in V(H)}{\sum}\underset{(\vec{s_{1}},\ldots,\vec{s}_{r-1})\in P_{ij}^{(1)}}{\sum}\mathds{1}_{E(H)}(x,\vec{s_{1}},\ldots,\vec{s}_{r-1})\\
=&\underset{\{i,j\}\in\binom{[|A_{1}|]}{2}}{\sum}\underset{x\in V(H)}{\sum}\underset{\substack{\vec{s}_{1}\in V(H)^{|A_{1}|-1}\\\vec{s}_{2}\in V(H)^{|A_{2}|}\\\vdots\\\vec{s}_{r-1}\in V(H)^{|A_{r-1}|}}}{\sum}\mathds{1}_{E(H)}(x,\vec{s}_{1},\vec{s}_{2},\ldots,\vec{s}_{r-1}).
\end{align*}
The last expression holds, since there is an one-to-one and onto map between $P^{(1)}_{ij}$ and 
\begin{equation*}
 V(H)^{|A_{1}|-1}\times V(H)^{|A_{2}|}\times\cdots\times V(H)^{|A_{r-1}|}.   
\end{equation*}
We continue calculating the above extremal term in the following way.
\begin{align*}
S_{21}\leq&\binom{|A_{1}|}{2}\underset{x\in V(H)}{\sum}\underset{\substack{\vec{s}_{1}\in V(H)^{|A_{1}|-1}\\\vec{s}_{2}\in V(H)^{|A_{2}|}\\\vdots\\\vec{s}_{r-1}\in V(H)^{|A_{r-1}|}}}{\sum}\mathds{1}_{E(H)}(x,\vec{s}_{1},\vec{s}_{2}\ldots,\vec{s}_{r-1})\\
=&\binom{|A_{1}|}{2}\underset{x\in V(H)}{\sum}\underset{\substack{\vec{s}_{2}\in V(H)^{|A_{2}|}\\\vdots\\\vec{s}_{r-1}\in V(H)^{|A_{r-1}|}}}{\sum}\left(\underset{s_{1}\in V(H)}{\sum}\mathds{1}_{E(H)}(x,s_{1},\vec{s}_{2}\ldots,\vec{s}_{r-1})\right)^{|A_{1}|-1}\\
\leq&\binom{|A_{1}|}{2}\underset{x\in V(H)}{\sum}\underset{\substack{\vec{s}_{2}\in V(H)^{|A_{2}|}\\\vdots\\\vec{s}_{r-1}\in V(H)^{|A_{r-1}|}}}{\sum}|V(H)|^{|A_{1}|-2}\underset{s_{1}\in V(H)}{\sum}\mathds{1}_{E(H)}(x,s_{1},\vec{s}_{2}\ldots,\vec{s}_{r-1})\\
\end{align*}
The last inequality holds, since $\mathds{1}_{E(H)}(x,s_{1},\vec{s}_{2}\ldots,\vec{s}_{r-1})\leq1$. Continuing this way, the above sum is as follows:-
\begin{align*}
S_{21}\leq&\binom{|A_{1}|}{2}\underset{x\in V(H)}{\sum}|V(H)|^{(|A_{1}|-2)+(|A_{2}|-1)+\ldots+(|A_{r-1}|-1)}\underset{s_{1},\ldots,s_{r-1}\in V(H)}{\sum}\mathds{1}_{E(H)}(\{x,s_{1},\ldots,s_{r-1}\})\\
=&\binom{|A_{1}|}{2}|V(H)|^{|A_{1}|+\ldots+|A_{r-1}|-r}\underset{x,s_{1},\ldots,s_{r-1}\in V(H)}{\sum}\mathds{1}_{E(H)}(\{x,s_{1},\ldots,s_{r-1}\})\\
=&\binom{|A_{1}|}{2}|V(H)|^{|A_{1}|+\ldots+|A_{r-1}|-r}r|E(H)|.
\end{align*}

Similarly, for each $t\in[r-1]$,
\begin{align*}
S_{2t}\leq \binom{|A_{t}|}{2}|V(H)|^{|A_{1}|+\ldots+|A_{r-1}|-r}r|E(H)|.
\end{align*}

Hence, 
\begin{align*}
S_{2}&\leq\underset{t\in[r-1]}{\sum}\binom{|A_{t}|}{2}|V(H)|^{|A_{1}|+|A_{2}|+\ldots+|A_{r-1}|-r}\ r|E(H)|\\
&\leq (r-1)\binom{|A_{r-1}|}{2}|V(H)|^{|A_{1}|+|A_{2}|+\ldots+|A_{r-1}|-r}r|E(H)|,
\end{align*}
The last inequality holds, since we have assumed $|A_{1}|\leq\ldots\leq|A_{r-1}|$. 

Therefore,
\begin{tcolorbox}
\begin{align}\label{inequality r-graph}
&\frac{(r|E(H)|)^{|A_{1}|\ldots|A_{r-1}|}}{|V(H)|^{r|A_{1}|\ldots|A_{r-1}|-(|A_{1}|+\ldots+|A_{r-1}|)-1}}-r(r-1)\binom{|A_{r-1}|}{2}|V(H)|^{|A_{1}|+|A_{2}|+\ldots+|A_{r-1}|-r}|E(H)|\nonumber\\
&\hspace{1cm}\leq|\mathfrak{S}|\leq(|A_{r}|-1)\frac{(|A_{1}|+\ldots+|A_{r-1}|)!}{|A_{1}|!\ldots|A_{r-1}|!}\binom{|V(H)|}{|A_{1}|+\ldots+|A_{r-1}|}\tag{$\star$}.
\end{align}
\end{tcolorbox}

\subsection{Establishment of the upper bound of \texorpdfstring{$|E(H)|$}{|E(H)|} and conclusion}

Using the above estimates of $|\mathfrak{S}|$, we have 
\begin{align*}
&\frac{(r|E(H)|)^{|A_{1}|\ldots|A_{r-1}|}}{|V(H)|^{r|A_{1}|\ldots|A_{r-1}|-(|A_{1}|+\ldots+|A_{r-1}|)-1}}-r(r-1)\binom{|A_{r-1}|}{2}|V(H)|^{|A_{1}|+|A_{2}|+\ldots+|A_{r-1}|-r}|E(H)|\\
&\hspace{5cm}\leq(|A_{r}|-1)\frac{1}{|A_{1}|!\ldots|A_{r-1}|!}\frac{|V(H)|!}{\left(|V(H)|-|A_{1}|\ldots-|A_{r-1}|\right)!}.
\end{align*}
After further simplification, this becomes:
\begin{align*}
&\frac{r|E(H)|}{|V(H)|^{r-\frac{|A_{1}|+\ldots+|A_{r-1}|}{|A_{1}|\ldots|A_{r-1}|}-\frac{1}{|A_{1}|\ldots|A_{r-1}|}}}\\
&\leq\Bigg[(|A_{r}|-1)\frac{1}{|A_{1}|!\ldots|A_{r-1}|!}\frac{|V(H)|!}{\left(|V(H)|-|A_{1}|\ldots-|A_{r-1}|\right)!}+\\
&\hspace{3cm}+r(r-1)\binom{|A_{r-1}|}{2}|V(H)|^{|A_{1}|+|A_{2}|+\ldots+|A_{r-1}|-r}|E(H)|\Bigg]^{\frac{1}{|A_1|\ldots|A_{r-1}|}}\\
&\leq\Bigg[(|A_{r}|-1)\frac{|V(H)|^{|A_{1}|+\ldots+|A_{r-1}|}}{|A_{1}|!\ldots|A_{r-1}|!}+\\
&\hspace{3cm}+r(r-1)\binom{|A_{r-1}|}{2}|V(H)|^{|A_{1}|+|A_{2}|+\ldots+|A_{r-1}|-r}\frac{|V(H)|^r}{r!}\Bigg]^{\frac{1}{|A_1|\ldots|A_{r-1}|}}\\
&\hspace{2cm}=|V(H)|^{\frac{|A_{1}|+\ldots+|A_{r-1}|}{|A_{1}|\ldots|A_{r-1}|}}\Bigg(\frac{|A_{r}|-1}{|A_{1}|!\ldots|A_{r-1}|!}+\frac{r(r-1)}{r!}\binom{|A_{r-1}|}{2}\Bigg)^{\frac{1}{|A_1|\ldots|A_{r-1}|}}.
\end{align*}
In the above, we have used the standard inequality for binomial coefficients, namely $\binom{N}{K}\leq\frac{N^{K}}{K!}$, where $N$ and $K$ are positive integers. The said expression simplifies to 
\begin{equation*}
|E(H)|\leq|V(H)|^{r-\frac{1}{|A_{1}|\ldots|A_{r-1}|}}\frac{1}{r}\Bigg(\frac{|A_{r}|-1}{|A_{1}|!\ldots|A_{r-1}|!}+\frac{r(r-1)}{r!}\binom{|A_{r-1}|}{2}\Bigg)^{\frac{1}{|A_1|\ldots|A_{r-1}|}}.
\end{equation*}
Since $H$ has been chosen arbitrarily, we have for each positive integer $n$, 
\begin{equation*}
\mathrm{ex}(n,\mathbb{K}^{(r)}[A_{1},\ldots,A_{r}])\leq Cn^{r-\frac{1}{|A_{1}|\ldots|A_{r-1}|}},
\end{equation*}
where
\begin{equation*}
C=\frac{1}{r}\Bigg(\frac{|A_{r}|-1}{|A_{1}|!\ldots|A_{r-1}|!}+\frac{r(r-1)}{r!}\binom{|A_{r-1}|}{2}\Bigg)^{\frac{1}{|A_1|\ldots|A_{r-1}|}}.    
\end{equation*}
This shows that $C$ depends on $\{r,|A_{1}|,\ldots,|A_{r-1}|,|A_{r}|\}$ and completes the proof of Theorem~\ref{upper bound of Erdos Result}.

\noindent\textbf{Methodological Summary.} The proof begins by expressing the counting problem as an appropriate sum of indicator functions. This converts the combinatorial problem into an analytic one. Repeated applications of H\"{o}lder’s inequality produce recursive estimates. The multiple sums are then rearranged using a discrete analogue of Fubini’s theorem. Finally, Bonferroni’s inequality controls the repeated configurations. Together, these ingredients yield the desired extremal estimate. The preceding argument illustrates a general analytic counting framework based on $\mathsf{L}^{p}$-techniques. For convenience, we refer to this framework as the \emph{$\mathsf{L}^{p}-$method}. We believe that this framework may be applicable to other extremal problems involving multilinear counting expressions. 

\begin{acknowledgement}
The author Subhankar Dash acknowledges the National Institute of Science Education and Research (NISER), Bhubaneswar and the Homi Bhabha National Institute (HBNI), Mumbai, for financial support through a doctoral fellowship.
\end{acknowledgement}

\bibliographystyle{amsplain}

\end{document}